\newcommand{\bi}{\bar{i}}
\newcommand{\bj}{\bar{j}}
\newcommand{\bk}{\bar{k}}
\newcommand{\bl}{\bar{l}}
\newcommand{\bn}{\bar{n}}
\newcommand{\bp}{\bar{p}}
\newcommand{\bq}{\bar{q}}
\newcommand{\bz}{\bar{z}}
\newcommand{\bM}{\bar{M}}
\newcommand{\bpartial}{\bar{\partial}}
\newcommand{\fg}{\mathfrak{g}}
\newcommand{\fRe}{\mathfrak{Re}}
\newcommand{\tr}{\mbox{tr}}
\newcommand{\ol}{\overline}
\newcommand{\ul}{\underline}
\newtheorem{theorem}{Theorem}[section]
\newtheorem{lemma}[theorem]{Lemma}
\newtheorem{proposition}{Proposition}
\theoremstyle{definition}
\newtheorem{remark}{Remark}
\numberwithin{equation}{section}
\title[Complex Monge-Amp\`ere Type Equation]
{A Monge-Amp\`ere Type Fully Nonlinear Equation \\ on Hermitian Manifolds}
\author[Bo Guan and Qun Li]{}
\subjclass{58J05, 58J32, 32W20, 35J25, 53C55.}
 \keywords{Complex Monge-Amp\`ere equations, fully nonlinear equations,
Hermitian manifolds, {\em a priori} estimates.}
 \email{guan@math.osu.edu}
 \email{qun.li@wright.edu}
\thanks{Part of research in this work was carried out while the authors
were supported in part by NSF grants}
\begin{document}
\maketitle

\centerline{\scshape Bo Guan }
\medskip
{\footnotesize
 \centerline{Department of Mathematics}
   \centerline{The Ohio State University}
   \centerline{Columbus, OH 43210, USA}
} 

\medskip

\centerline{\scshape Qun Li}
\medskip
{\footnotesize
 \centerline{Department of Mathematics and Statistics}
   \centerline{Wright State University}
   \centerline{Dayton, OH 45435, USA}
}

%

\begin{abstract}
We study a fully nonlinear equation of complex Monge-Amp\`ere type on
Hermitian manifolds. We establish the {\em a priori} estimates for
solutions of the equation up to the second order derivatives with the help
of a subsolution.

\end{abstract}

\bigskip

\section{Introduction}
\label{gblq-I}
\setcounter{equation}{0}
\medskip

Let $(M^n, \omega)$ be a compact Hermitian manifold of
dimension greater than  $1$, with smooth boundary $\partial M$ (which may be
empty),
and $\chi$ a smooth real $(1,1)$ form on $M$. Define
\[ \chi_u = \chi + \frac{\sqrt{-1}}{2} \partial \bpartial u
\;\; \mbox{and} \; [\chi] = \big\{\chi_u:  \, u \in C^2 (M)\big\}. \]
In this paper we are concerned with the equation
 \begin{equation}
\label{CH-I10}
 \chi_u^n =  \psi \chi_u^{n-1} \wedge \omega, \; \chi_u > 0
\;\; \mbox{on $M$}.
\end{equation}

When  $M$ is closed, both $\omega$, $\chi$ are K\"ahler and
$\psi$ is constant,
equation~\eqref{CH-I10} was introduced by Donaldson~\cite{Donaldson99a}
in the setting of moment maps.
Donaldson observed that in this case the solution of
equation~\eqref{CH-I10} is unique (up to a constant) if exists, and that a
necessary condition for the existence of solution is
$[n \chi - \psi \omega] > 0$.
He remarked that a natural conjecture would be that this is also a
sufficient condition.

Donaldson's problem was studied by Chen~\cite{Chen04},
Weinkove~\cite{Weinkove04}, \cite{Weinkove06}, Song and Weinkove~\cite{SW08}
using parabolic methods as limit of the $J$-flow introduced by
Donaldson~\cite{Donaldson99a} and Chen~\cite{Chen00b}.
In \cite{SW08} Song and Weinkove gave a necessary and sufficient condition
for convergence of the $J$-flow. Later on
Fang, Lai and Ma~\cite{FLM11} extended their approach and solved the equation
 \begin{equation}
\label{CH-I10alpha}
 \chi_u^n =  c_{\alpha} \chi_u^{n-\alpha} \wedge \omega^{\alpha},
  \; \chi_u > 0 \;\; \mbox{on $M$}.
\end{equation}
for all $1 \leq {\alpha} < n$.

A key ingredient in solving elliptic or parabolic fully nonlinear equations
is to derive {\em a priori} estimates up to the second order derivatives.
For the complex Monge-Amp\`ere equation on closed K\"ahler manifolds, these
estimates were established by Yau~\cite{Yau78} and Aubin~\cite{Aubin78}.
Their results and techniques had far-reaching influences in both
geometry and to the theory of nonlinear PDEs on manifolds.
In 1987, Cherrier~\cite{Cherrier87} studied the complex Monge-Amp\`ere
equation on Hermitian manifolds. He established the estimates for second
order derivatives in the general case, and extended Yau's zeroth
order estimate under an additional assumption on the Hermitian metric.
Recently, Tosatti and Weinkove~\cite{TWv10b}  were able
to carry out Yau's estimate on general closed Hermitian mannifolds.

There has been increasing interest to study fully nonlinear elliptic and
parabolic equations other than the complex Monge-Amp\`ere equation on
K\"ahler or Hermitian manifolds, both from geometric problems such as
Donaldson's problem mentioned above, and from the PDE point of view.
In this paper our main interest is to seek general technical methods in
establishing {\em a priori} estimates. We shall confine ourselves to
$\alpha = 1$ in \eqref{CH-I10alpha} but our method works for more
general equations and in particular for all $\alpha < n$. We shall treat
the other cases in separate papers.

Our first result for equation~\eqref{CH-I10} is the following

\begin{theorem}
\label{gl-thm10}
Let
$u \in C^{4} (M)$ a solution of equation~\eqref{CH-I10}
and set $C_0 = \sup_M u - \inf_M u$.
 Assume that there exists a
function $\ul{u} \in C^2 (M)$ satisfying
 \begin{equation}
\label{CH-I20}
\chi_{\ul{u}}^n \geq  \psi \chi_{\ul{u}}^{n-1} \wedge \omega, \;
\chi_{\ul{u}} > 0
\;\; \mbox{on $M$}.
\end{equation}
Then there are constants
 $C_1$, $C_2$, depending on $C_0$,
$|\ul{u}|_{C^2{(M})}$, the positive lower bound of $\chi_{\ul{u}}$, and
$\inf_M \psi > 0$ as well as other known data, such that
 \begin{equation}
\label{CH-I30}
\max_M |\nabla u| \leq C_1 (1 + \max_{\partial M} |\nabla u|), \;\;
|\Delta u| \leq C_2 ((1 + \max_{\partial M} |\Delta u|)
\;\; \mbox{on $M$}.
\end{equation}
\end{theorem}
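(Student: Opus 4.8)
\emph{Reduction and structure.} The plan is to rewrite \eqref{CH-I10} in scalar form. Taking the eigenvalues $\lambda[\chi_u]=(\lambda_1,\dots,\lambda_n)$ of $\chi_u$ with respect to $\omega$, the equation is equivalent to
\[
 F(\chi_u):=f\big(\lambda[\chi_u]\big)=\frac{\psi}{n},\qquad
 f(\lambda)=\Big(\sum_{i=1}^n \lambda_i^{-1}\Big)^{-1},
\]
with $f$ defined on the positive cone $\Gamma_n=\{\lambda_i>0\}$; it is smooth, symmetric, homogeneous of degree one, strictly increasing and concave, so that $F^{i\bar j}:=\partial F/\partial(\chi_u)_{i\bar j}$ is positive definite and $F$ is concave in the entries of $\chi_u$. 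The decisive gain from $\inf_M\psi>0$ is that along a solution $\sum_i\lambda_i^{-1}=n/\psi$ is bounded, which forces $\lambda_i\geq c_0>0$ for every $i$; hence $\chi_u\geq c_0\,\omega$ \emph{automatically}, and only an \emph{upper} bound for the eigenvalues, i.e.\ for $\tr_\omega\chi_u=n+\Delta u$ up to lower order, remains. A short computation then gives $\tfrac1n\leq\sum_i F^{i\bar i}\leq C$. Finally, writing the subsolution inequality \eqref{CH-I20} as $F(\chi_{\ul u})\geq\psi/n=F(\chi_u)$ and combining the tangent plane inequality for the concave $F$ with $\chi_{\ul u}\geq\epsilon_0\,\omega$, I record
\[
 F^{i\bar j}\big(\chi_u-\chi_{\ul u}\big)_{i\bar j}\leq 0,\qquad
 F^{i\bar j}\big(\chi_{\ul u}\big)_{i\bar j}\geq\epsilon_0\sum_i F^{i\bar i},
\]
the two facts through which the subsolution will enter.

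\emph{The second order estimate.} Since $\Delta u\geq-\tr_\omega\chi$ follows from $\chi_u>0$, it suffices to bound $\tr_\omega\chi_u$ from above, and I would do so by the maximum principle applied to
\[
 W=\log\big(\tr_\omega\chi_u\big)+\varphi(u-\ul u),
\]
with $\varphi$ convex and decreasing, to be fixed. If $W$ is maximal on $\partial M$ the boundary term in \eqref{CH-I30} appears; otherwise at an interior maximum $x_0$ one has $\nabla W=0$ and $LW\leq0$ for $L=F^{i\bar j}\nabla_i\nabla_{\bar j}$. In a frame diagonalizing $\omega$ and $\chi_u$ at $x_0$, differentiating $F=\psi/n$ once gives $F^{i\bar j}\nabla_k(\chi_u)_{i\bar j}=\nabla_k(\psi/n)$, and the second derivative of the equation, after commuting covariant derivatives, expresses $L(\tr_\omega\chi_u)$ through $\nabla\nabla\psi$, the curvature and torsion of $\omega$, and the concavity term $-F^{i\bar j,p\bar q}\nabla_k(\chi_u)_{i\bar j}\overline{\nabla_k(\chi_u)_{p\bar q}}\geq0$. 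The concavity term is used to dominate the negative contribution $-F^{i\bar j}\nabla_i(\tr_\omega\chi_u)\nabla_{\bar j}(\tr_\omega\chi_u)/(\tr_\omega\chi_u)^2$, the convexity term $\varphi''F^{i\bar j}\nabla_i(u-\ul u)\nabla_{\bar j}(u-\ul u)$ in $L\varphi(u-\ul u)$ is chosen large enough to absorb through Cauchy--Schwarz all first order terms in $u$ (so that no prior gradient bound is needed), and the subsolution term $-\varphi'\,F^{i\bar j}(\chi_u-\chi_{\ul u})_{i\bar j}\geq0$ supplies the remaining coercivity. Reading off the resulting inequality at $x_0$ yields $\tr_\omega\chi_u\leq C(1+\max_{\partial M}\tr_\omega\chi_u)$, which is the second estimate in \eqref{CH-I30}.

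\emph{The gradient estimate and the main difficulty.} The gradient bound I would obtain by the same method applied to $G=|\nabla u|^2e^{\varphi(u)}$ (or $\log|\nabla u|^2+\varphi(u)$): at an interior maximum the good term $F^{i\bar j}\nabla_i\nabla_k u\,\overline{\nabla_j\nabla_k u}$ generated by $L|\nabla u|^2$ absorbs the third order torsion terms through Cauchy--Schwarz, the once differentiated equation controls the remaining terms, and a sufficiently convex $\varphi$ closes the estimate, giving the first inequality in \eqref{CH-I30}. The principal obstacle throughout is that $\omega$ is only Hermitian: commuting covariant derivatives generates torsion terms that are \emph{linear in the third derivatives} of $u$ and have no definite sign, and these are precisely the terms absent from the classical K\"ahler arguments of Yau and Aubin. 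A second, structural difficulty is that, unlike the complex Monge--Amp\`ere equation, here \emph{no} eigenvalue of $\chi_u$ is small, so the usual device of exploiting a large $\sum_i F^{i\bar i}$ is unavailable; the necessary coercivity must instead be produced by the concavity term together with the subsolution through the two inequalities above. Balancing these competing terms---the torsion against the concavity, at the cost of a controllable multiple of $(\tr_\omega\chi_u)\sum_i F^{i\bar i}$---is the technical heart of the proof.
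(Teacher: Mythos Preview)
Your outline correctly identifies the structural difficulty---$\sum_i F^{i\bar i}$ stays bounded, so one cannot close the argument by making that sum large---but it does not say how the coercivity is actually produced, and this is the genuine gap. Concavity of $F$ together with $F(\chi_{\ul u})\ge F(\chi_u)$ only yields the \emph{non-strict} inequality $F^{i\bar j}(\chi_u-\chi_{\ul u})_{i\bar j}\le 0$. Multiplying by $-\varphi'>0$ therefore gives a term that is merely $\ge 0$; it cannot beat the Hermitian error terms of size $-C$ (after dividing by $\tr_\omega\chi_u$) that survive the commutator and torsion computations. With $\sum_i F^{i\bar i}$ bounded and the subsolution term only nonnegative, the inequality at the maximum reads $0\ge -C+(\text{nonneg.})$, which yields nothing. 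The paper closes exactly this gap by a quantitative lemma: writing $F^{i\bar i}=\fg^{i\bar i\,2}$ in the diagonal frame, one shows that if $W=\tr_\omega\chi_u$ is large then
\[
\sum_i \frac{(\chi_{\ul u})_{i\bar i}}{\fg_{i\bar i}^{\,2}}\ \ge\ \frac{n+\theta}{\psi}
\ =\ \sum_i\frac{1}{\fg_{i\bar i}}+\frac{\theta}{\psi}
\]
for a uniform $\theta>0$ (via Cauchy--Schwarz on $\sum_{i\ge2}$, using $\sum 1/\lambda_i(\chi_{\ul u})\le n/\psi$ and $\chi_{\ul u}\ge\epsilon\omega$). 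This strict gap $\theta/\psi$---not the soft concavity inequality---is what makes $A F^{i\bar j}(\ul u-u)_{i\bar j}\ge A\theta/\psi$ and lets one absorb the $-C$ by choosing $A$ large. You have noted the obstacle but not supplied the mechanism that overcomes it.

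A second, related gap is in your gradient estimate: you propose the test function $|\nabla u|^2 e^{\varphi(u)}$ with no subsolution in $\varphi$. The paper uses $\varphi=A e^{\ul u-u}$, and the proof again splits on whether $W$ is large (invoke the same quantitative lemma) or bounded (use $\prod\fg^{i\bar i}\ge c/W^{\,n-1}$ and AM--GM to extract $c_0|\nabla(\ul u-u)|^{2/n}$). The authors remark explicitly that they do not know how to obtain either estimate without the subsolution entering through this lemma; your sketch for the gradient bound relies on mechanisms (large $\sum F^{i\bar i}$, or a strictly positive $\varphi' F^{i\bar j}(\chi_u)_{i\bar j}$ term) that are not available for this equation.
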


We remark that  both $C_1$ and $C_2$ in Theorem~\ref{gl-thm10} depend
on $C_0$, but the estimate for $\Delta u$ is independent of the gradient
bound. (i.e. $C_2$ is independent of $C_1$.)
Apparently, assumption~\eqref{CH-I20}  is a trivial necessary
condition for the solvability of equation~\eqref{CH-I10}.
Following the literature we shall call the function $\ul{u}$
a {\em subsolution} of equation~\eqref{CH-I10}.
It seems worthwhile to remark that the subsolution $\ul{u}$
plays key roles in our proof of both estimates in \eqref{CH-I30};
see Sections~\ref{glq-C1}-\ref{glq-C2} for details. This appears to us a
rather new phenomena, and we are not clear how to derive
these estimates without using $\ul{u}$.
We also remark that the gradient estimate seems new even in the K\"ahler
case.

Theorem~\ref{gl-thm10} still holds under the following
assumption which is slightly weaker than \eqref{CH-I20}
\begin{equation}
\label{CH-I20'}
(n \chi_{\ul{u}} - (n-1) \psi \omega) \wedge  \chi_{\ul{u}}^{n-2} > 0,  \;
\chi_{\ul{u}} > 0
\;\; \mbox{on $M$}.
\end{equation}
When $M$ is K\"ahler and $\psi$ is a constant, this condition was first
given by Song and Weinkeve~\cite{SW08} and proved to be necessary and
sufficient for the solvability ~\eqref{CH-I10} on closed  K\"ahler manifolds.

The estimates in Theorem~\ref{gl-thm10} enable us to treat
the Dirichlet problem for equation~\eqref{CH-I10} on Hermitian manifolds
with boundary.
More precisely we can prove the following existence result under
the assumption of existence of a subsolution.

\begin{theorem}
\label{gl-thm20}
Let $(M^n, \omega)$ be a compact Hermitian manifold with smooth boundary
$\partial M$, $\psi \in C^{\infty} (\bM)$, $\psi > 0$,
where $\bM = M \cup \partial M$ and
$\varphi \in C^{\infty} (\partial M)$.
Suppose there exists a subsolution $\ul{u} \in C^2 (\bM)$ satisfying
 \begin{equation}
\label{CH-I20b}
 \left\{ \begin{aligned}
\chi_{\ul{u}}^n  \,& \geq \psi \chi_{\ul{u}}^{n-1} \wedge \omega, \;
\chi_{\ul{u}}  > 0  \;\; \mbox{on $\bM$}  \\
  \,& \ul{u} = \varphi \;\; \mbox{on $\partial M$}.
  \end{aligned}  \right.
\end{equation}
Then equation~\eqref{CH-I10} admits a unique
solution $u \in C^{\infty} (M)$ with $u = \varphi$ on $\partial M$.
\end{theorem}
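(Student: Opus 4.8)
The plan is to solve the Dirichlet problem by the continuity method, with Theorem~\ref{gl-thm10} supplying all interior estimates so that only estimates on $\partial M$ remain. First I would recast \eqref{CH-I10} as a fully nonlinear equation in the eigenvalues $\lambda=\lambda(\chi_u)$ of $\chi_u$ with respect to $\omega$: since $\chi_u^n/\omega^n=\sigma_n(\lambda)$ and $\chi_u^{n-1}\wedge\omega/\omega^n=\tfrac1n\sigma_{n-1}(\lambda)$, equation~\eqref{CH-I10} is equivalent to
\[
G(\chi_u) := \frac{n\,\sigma_n(\lambda)}{\sigma_{n-1}(\lambda)}
= \Big(\frac1n\sum_{i}\frac1{\lambda_i}\Big)^{-1} = \psi,
\qquad \lambda\in\Gamma_n,
\]
where $\Gamma_n=\{\lambda_i>0\}$ and the constraint $\chi_u>0$ is exactly $\lambda\in\Gamma_n$. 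The operator $G$, the harmonic mean of the eigenvalues, is smooth, increasing, and concave on $\Gamma_n$. To build the continuity path I would interpolate the right-hand side through the subsolution: set $\psi_t=t\psi+(1-t)\,G(\chi_{\ul u})$ and consider $G(\chi_{u^t})=\psi_t$ on $M$ with $u^t=\varphi$ on $\partial M$. At $t=0$ the subsolution $u^0=\ul u$ is an exact solution and $t=1$ is \eqref{CH-I10}; since \eqref{CH-I20b} gives $G(\chi_{\ul u})\geq\psi$, one has $\psi\leq\psi_t\leq G(\chi_{\ul u})$, so $\inf_M\psi_t\geq\inf_M\psi>0$ and $\ul u$ is a subsolution of every equation in the family. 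Hence Theorem~\ref{gl-thm10} applies uniformly in $t$.

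Openness of $T=\{t:\text{a solution }u^t\in C^{2,\alpha}(\bM)\text{ exists}\}$ follows from the implicit function theorem. The linearization of $u\mapsto G(\chi_u)$ at a solution is $Lv=G^{i\bar j}\partial_i\partial_{\bar j}v+(\text{first order})$, which is elliptic because $G^{i\bar j}=\partial G/\partial(\chi_u)_{i\bar j}$ is positive definite on $\Gamma_n$, and which carries no zeroth-order term. Consequently the maximum principle forces $L$ to have trivial kernel, and with Schauder theory $L\colon C^{2,\alpha}_0(\bM)\to C^{\alpha}(\bM)$ is an isomorphism under Dirichlet data; thus solvability persists for nearby $t$.

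Closedness is where the substance lies: I need uniform a priori $C^{2,\alpha}(\bM)$ bounds. For $C^0$, the comparison principle for the increasing concave operator $G$, together with $u^t=\ul u$ on $\partial M$ and $G(\chi_{\ul u})\geq\psi_t$, yields $u^t\geq\ul u$, controlling $\inf_M u^t$. For the upper bound the arithmetic--harmonic mean inequality gives $\tr_\omega\chi_{u^t}=\sum_i\lambda_i\geq n\,G(\chi_{u^t})=n\psi_t\geq n\inf_M\psi$, so the elliptic operator $\Delta_\omega u^t:=\tr_\omega(\sqrt{-1}\partial\bpartial u^t)$ is bounded below by a fixed constant; subtracting the solution of a fixed linear Poisson equation makes $u^t$ subharmonic up to controlled terms and forces $\sup_M u^t\leq\sup_{\partial M}\varphi+C$. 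This bounds $C_0=\sup_M u^t-\inf_M u^t$ uniformly, so by Theorem~\ref{gl-thm10} the interior gradient and Laplacian bounds hold in terms of $\max_{\partial M}|\nabla u^t|$ and $\max_{\partial M}|\Delta u^t|$, and everything reduces to estimates on $\partial M$.

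The boundary estimates are the main obstacle. The boundary gradient bound comes from barriers built from $\ul u$ and the distance to $\partial M$: the inequality $u^t\geq\ul u$ pins the inner normal derivative from one side, a supersolution barrier from the other, while tangential derivatives are fixed by $\varphi$. For the second derivatives on $\partial M$, the pure tangential part follows by differentiating $u^t=\varphi$ twice along $\partial M$ and the mixed tangential--normal part by further barriers; the genuinely hard piece is the double normal derivative $\partial_{\nu\nu}u^t$, which I would bound by combining the already-controlled components with the equation $G(\chi_{u^t})=\psi_t$ and the concavity of $G$, the subsolution re-entering to supply a lower barrier that keeps the eigenvalues inside $\Gamma_n$. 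With $|\Delta u^t|$ bounded on $\partial M$ one obtains a uniform $C^2(\bM)$ bound; the concavity of $G$ then lets Evans--Krylov upgrade this to a uniform $C^{2,\alpha}(\bM)$ estimate, and differentiating the equation with Schauder theory bootstraps to $C^\infty$. Hence $T$ is closed, $T=[0,1]$, and $t=1$ gives the desired smooth solution; uniqueness is immediate from the comparison principle for $G$, since two solutions with the same boundary data must coincide.
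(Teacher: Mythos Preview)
Your outline is essentially the approach the paper has in mind: the paper itself does not give a self-contained proof of Theorem~\ref{gl-thm20} but reduces it to the interior estimates of Theorem~\ref{gl-thm10} together with boundary estimates, explicitly stating that the boundary gradient estimate ``follows immediately from a barrier argument'' and that the second-order boundary estimates are ``similar to the Monge--Amp\`ere equation case in \cite{GL10} and will be omitted here.'' Your continuity-method framework, with the harmonic-mean reformulation $G(\lambda)=(\tfrac1n\sum\lambda_i^{-1})^{-1}$ and the use of concavity for Evans--Krylov, is standard and correct, and your sketch of the boundary estimates (tangential from the boundary data, mixed from barriers, double-normal from the equation plus concavity and the subsolution) is exactly the structure carried out in \cite{GL10}.

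One technical point to patch: your continuity path takes $\psi_0=G(\chi_{\ul u})$, but the hypothesis only gives $\ul u\in C^2(\bM)$, so $\psi_0$ is merely continuous and $u^0=\ul u$ is not in $C^{2,\alpha}$. This prevents you from invoking the implicit function theorem in $C^{2,\alpha}$ at $t=0$. The standard cure is to first approximate $\ul u$ by a smooth strict subsolution (mollify and subtract a small multiple of a strictly plurisubharmonic function to recover the strict inequality), run the continuity argument with the smooth subsolution, and observe that the a~priori constants in Theorem~\ref{gl-thm10} and in the boundary estimates depend only on $|\ul u|_{C^2}$ and the lower bound of $\chi_{\ul u}$, which are stable under the approximation. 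With this adjustment your argument goes through and matches the paper's intended proof.
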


In order to prove Theorem~\ref{gl-thm20} we need to establish
{\em a priori} boundary estimates. The gradient estimate on the boundary
follows immediately from a barrier argument. The proof for the second
order boundary estimates is similar to the Monge-Amp\`ere equation
case in \cite{GL10} and will be omitted here. We shall come back to
the issue for more general equations including (\ref{CH-I10alpha}) in our
forthcoming papers
where we shall also discuss the existence questions for the closed
manifold case. In this paper we will just present the global {\em a priori}
estimates up to the second order derivatives of the solutions to
equation~(\ref{CH-I10}).

The rest of this paper is organized as follows. In section~\ref{glq-P}
we fix some notations and introduce some fundamental formulas in Hermitian
geometry, which will be used throughout the paper. We also establish a
crucial lemma in this section that will be applied to deriving the estimates
in the following sections. Section~\ref{glq-C1} and Section~\ref{glq-C2} will
 be devoted to establishing the global gradient estimates and the estimates
for the second order derivatives respectively. In both sections we make the
important use of the existence of a {\em subsolution}.

We dedicate this article with sincere respect and admiration to Professor
Avner Friedman on the occasion of his 80th birthday. We wish to thank
Wei Sun for pointing out several mistakes in previous versions.
Part of this work was done while the first author was in Xiamen
University in summer 2011.


\section{Preliminaries}
\label{glq-P}
\setcounter{equation}{0}
\medskip

We shall follow the notations in \cite{GL10} where the reader can also
find a brief introduction to the background materials for Hermitian
manifolds.
In particular, $g$ and $\nabla$ will denote the
Riemannian metric and Chern connection of $(M, \omega)$. The
torsion and curvature tensors of $\nabla$ are defined by
\begin{equation}
\label{cma-K95}
\begin{aligned}
   T (u, v)  = \,& \nabla_u v - \nabla_v u - [u,v], \\
 R (u, v) w  = \,& \nabla_u \nabla_v w - \nabla_v \nabla_u w - \nabla_{[u,v]} w,
\end{aligned}
\end{equation}
respectively.
In local coordinates $z = (z_1, \ldots, z_n)$, 
\begin{equation}
\label{cma-K70}
\left\{ \begin{aligned}
 g_{i \bj} \,& = g \Big(\frac{\partial}{\partial z_i},
 \frac{\partial}{\partial \bz_j}\Big), \;\; \{g^{i\bj}\} = \{g_{i\bj}\}^{-1}, \\
  T^k_{ij} \,& = \Gamma^k_{ij} - \Gamma^k_{ji}
   = g^{k\bl} \Big(\frac{\partial g_{j\bl}}{\partial z_i}
           - \frac{\partial g_{i\bl}}{\partial z_j}\Big),  \\
 R_{i\bj k\bl} \,&   =  - g_{m \bl} \frac{\partial \Gamma_{ik}^m}{\partial \bz_j}
 = - \frac{\partial^2 g_{k\bl}}{\partial z_i \partial \bz_j}
       + g^{p\bq} \frac{\partial g_{k\bq}}{\partial z_i}
                  \frac{\partial g_{p\bl}}{\partial \bz_j}.
\end{aligned} \right.
\end{equation}

Let $v \in C^4 (M)$. For convenience we write in local coordinates
\[ v_{i\bj} = \nabla_{\bj} \nabla_i v,  v_{i\bj k} = \nabla_k v_{i\bj},
   \; \mbox{etc.} \]
Recall $v_{i\bj} = v_{\bj i} = \partial_i \bpartial_j v$ and
$v_{i\bj k} = \partial_k v_{i\bj} - \Gamma_{ki}^l v_{l\bj}$. It follows that
\begin{equation}
\label{gblq-B145}
\left\{
\begin{aligned}
 v_{i \bj k} - v_{k \bj i} = \,& T_{ik}^l v_{l\bj},  \\
v_{i \bj \bk} - v_{i \bk \bj} = \,& \ol{T_{jk}^l} v_{i\bl}.
\end{aligned}
 \right.
\end{equation}
We calculate
\begin{equation}
\label{gblq-B146}
 \begin{aligned}
v_{i\bj k\bl}
   = \,&  \bpartial_l v_{i\bj k} - \ol{\Gamma_{lj}^q} v_{i\bq k} \\
   = \,&  \bpartial_l (\partial_k v_{i\bj} - \Gamma_{ki}^p v_{p\bj})
       - \ol{\Gamma_{lj}^q} v_{i\bq k} \\
   = \,&  \bpartial_l \partial_k v_{i\bj} - \bpartial_l \Gamma_{ki}^p v_{p\bj}
       - \Gamma_{ki}^p (v_{p\bj \bl} + \ol{\Gamma_{lj}^q} v_{p\bq})
       - \ol{\Gamma_{lj}^q} v_{i\bq k} \\
   = \,&  \partial_k \bpartial_l v_{i\bj} + g^{p\bq} R_{k\bl i\bq} v_{p\bj}
       - \Gamma_{ki}^p v_{p\bj \bl} - \ol{\Gamma_{lj}^q} v_{i\bq k}
       - \Gamma_{ki}^p \ol{\Gamma_{lj}^q} v_{p\bq},
\end{aligned}
\end{equation}
\[ v_{i \bj \bl k} = \ol{v_{j\bi l \bk}}
  =  \bpartial_l \partial_k v_{i\bj}
       - \ol{\Gamma_{lj}^p} \Gamma_{ki}^q v_{q\bp}
       - \ol{\Gamma_{lj}^p} v_{i\bp k} - \Gamma_{ki}^q v_{\bj q \bl}
       + g^{q\bp} R_{k\bl \bj q} v_{i\bp}. \]
Therefore,
\begin{equation}
\label{gblq-B147}
\left\{ \begin{aligned}
v_{i\bj k\bl} - v_{i\bj \bl k}
      = \,& g^{p\bq} R_{k\bl i\bq} v_{p\bj}
          - g^{p\bq} R_{p \bl k \bj} v_{i\bq}, \\
v_{i \bj k \bl} - v_{k \bl i \bj}
  = \,&  g^{p\bq} (R_{k\bl i\bq} v_{p\bj} - R_{i\bj k\bq} v_{p\bl})
        + T_{ik}^p v_{p\bj \bl} + \ol{T_{jl}^q} v_{i\bq k}
        - T_{ik}^p \ol{T_{jl}^q} v_{p\bq}.
 \end{aligned}  \right.
\end{equation}
The second identity in \eqref{gblq-B147} follows from \eqref{gblq-B146},
\eqref{gblq-B145} and
\[ \Gamma_{ki}^p \ol{\Gamma_{lj}^q} - \Gamma_{ik}^p \ol{\Gamma_{jl}^q}
   + T_{ik}^p \ol{\Gamma_{jl}^q} + \Gamma_{ik}^p \ol{T_{jl}^q} = T_{ik}^p \ol{T_{jl}^q}. \]
It can also be derived as follows.
\begin{equation}
\label{gblq-R150}
 \begin{aligned}
v_{i \bj k \bl} - v_{k \bl i \bj}
 = \,& (v_{i \bj k \bl} - v_{k \bj i \bl})
         + (v_{k \bj i \bl} - v_{k \bj \bl i}) \\
     &  + (v_{k \bj \bl i} - v_{k \bl \bj i})
         + (v_{k \bl \bj i}- v_{k  \bl i \bj}) \\
 = \,& \nabla_{\bl} (T_{ik}^p v_{p\bj})
         +  g^{p\bq} R_{i\bl k\bq} v_{p\bj}
          - g^{p\bq} R_{i\bl p \bj} v_{k\bq} \\
     & + \nabla_i (\ol{\Gamma_{jl}^q} v_{k\bq})
         - g^{p\bq} R_{i\bj k\bq} v_{p\bl}
          + g^{p\bq} R_{i\bj p \bl} v_{k\bq} \\
 = \,& \nabla_{\bl} T_{ik}^p v_{p\bj} + T_{ik}^p v_{p\bj \bl}
       +  g^{p\bq} R_{i\bl k\bq} v_{p\bj}
       - g^{p\bq} R_{i\bl p \bj} v_{k\bq} \\
     & + \nabla_i \ol{\Gamma_{jl}^q} v_{k\bq} + \ol{T_{jl}^q} v_{k\bq i}
       - g^{p\bq} R_{i\bj k\bq} v_{p\bl}
       + g^{p\bq} R_{i\bj p \bl} v_{k\bq} \\
 = \,& g^{p\bq} (R_{k\bl i\bq} v_{p\bj} - R_{i\bj k\bq} v_{p\bl})
        + T_{ik}^p v_{p\bj \bl} + \ol{T_{jl}^q} v_{k\bq i} \\
  = \,& g^{p\bq} (R_{k\bl i\bq} v_{p\bj} - R_{i\bj k\bq} v_{p\bl})
        + T_{ik}^p v_{p\bj \bl} + \ol{T_{jl}^q} v_{i\bq k}
        - T_{ik}^p \ol{T_{jl}^q} v_{p\bq}.
 \end{aligned}
\end{equation}


Let $u \in C^4 (M)$ be a solution of equation~\eqref{CH-I10}.
As in \cite{GL10},
we denote $\fg_{i\bj} = \chi_{i\bj} + u_{i\bj}$,
$\{\fg^{i\bj}\} =  \{\fg_{i\bj}\}^{-1}$
and let $W = \tr \chi + \Delta u$.
Assume that $g_{i\bj} = \delta_{ij}$
and $\fg_{i\bj}$ is diagonal at a fixed point $p \in M$. Then
\begin{equation}
\label{gblq-R155a}
 \begin{aligned}
u_{i \bi k \bk} - u_{k \bk i \bi}
   = \,& R_{k\bk i\bp} u_{p\bi} - R_{i\bi k\bp} u_{p\bk}
  + 2 \fRe\{\ol{T_{ik}^j} u_{i\bj k}\} -  T_{ik}^p \ol{T_{ik}^q} u_{p\bq},
  \end{aligned}
\end{equation}
and therefore,
\begin{equation}
\label{gblq-R155}
 \begin{aligned}
\fg_{i \bi k \bk} - \fg_{k \bk i \bi}
   = \,& R_{k\bk i\bi} \fg_{i\bi} - R_{i\bi k\bk} \fg_{k\bk}
         + 2 \fRe\{\ol{T_{ik}^j} \fg_{i\bj k}\}
         - |T_{ik}^j|^2 \fg_{j\bj} - G_{i\bi k\bk}
  \end{aligned}
 \end{equation}
where
 \begin{equation}
\begin{aligned}
  G_{i\bi k\bk}
   = \,& \chi_{k \bk i \bi} - \chi_{i \bi k \bk}
     +  R_{k\bk i\bp} \chi_{p\bi} - R_{i\bi k\bp} \chi_{p\bk}
     + 2 \fRe\{\ol{T_{ik}^j} \chi_{i\bj k}\}
     - T_{ik}^p \ol{T_{ik}^q} \chi_{p\bq}.
       \end{aligned}
\end{equation}

In local coordinates, equation~\eqref{CH-I10} can be written in the form
\begin{equation}
\label{CH-I10k=1}
\fg^{i\bj} g_{i\bj} =  \frac{n}{\psi}.
\end{equation}
Differentiating this equation twice gives at $p$
\begin{equation}
\label{gblq-M70}
\begin{aligned}
F^{i\bi} (u_{i\bi k} u_{\bk} + u_k u_{i\bi \bk})
    = & \,  2 \fRe \{f_k u_{\bk} - F^{i\bi} \chi_{i\bi k} u_{\bk}\}.
       \end{aligned}
\end{equation}
\begin{equation}
\label{gblq-C75.1}
F^{i\bi} \fg_{i\bi k\bk}
 - (F^{i\bi} \fg^{j\bj} + F^{j\bj} \fg^{i\bi}) \fg_{i\bj k} \fg_{j\bi \bk}
   = f_{k\bk}
\end{equation}
where $F^{i\bi} = (\fg^{i\bi})^2$ and $f = - n \psi^{-1}$.
Note that $\sum F^{i\bi} \leq (\sum \fg^{i\bi})^2 \leq C$;
we shall use this fact without further reference.
By \eqref{gblq-R155} and \eqref{gblq-C75.1} we have
\begin{equation}
\label{gblq-C120.1}
\begin{aligned}
F^{i\bi} W _{i\bi}
 = \,& F^{i\bi} \fg_{j\bj i\bi}
 \geq \sum_k  F^{i\bi} \fg^{j\bj}
           (|\fg_{i\bj k} - T_{ik}^j \fg_{j\bj}|^2 + |\fg_{i\bj k}|^2) - C W.
        \end{aligned}
\end{equation}

Finally, we note that since $\ul{u} \in C^2 (M)$ and
$\chi_{\ul{u}} > 0$ on $M$,
\begin{equation}
\label{gblq-C214}
 \epsilon \omega \leq \chi_{\ul{u}} \leq \epsilon^{-1} \omega
 \end{equation}
for some $\epsilon > 0$. Consequently,
\begin{equation}
\label{gblq-C215}
\sum \fg^{i\bi} (\chi_{i\bi} + \ul{u}_{i\bi}) \geq \epsilon \sum \fg^{i\bi}.
\end{equation}

Let $\lambda_1 (\chi_{\ul{u}}), \ldots, \lambda_n (\chi_{\ul{u}})$
denote the eigenvalues of $\{\chi_{i\bj} + \ul{u}_{i\bj}\}$.
Then \eqref{CH-I20} is equivalent to
\begin{equation}
\label{CH-I10k=1'}
\sum \frac{1}{\lambda_i (\chi_{\ul{u}})}  \leq  \frac{n}{\psi}
\end{equation}
while \eqref{CH-I20'} is equivalent to
\begin{equation}
\label{CH-I10k=1a}
\sum_{i \neq k} \frac{1}{\lambda_i (\chi_{\ul{u}})} <  \frac{n}{\psi}
\,\, \mbox{for each $k = 1, \ldots, n$}.
\end{equation}

It is clear that at a point
where $g_{i\bj} = \delta_{ij}$ in local coordinates,
\begin{equation}
\label{CH-I10k=1''}
\sum \frac{1}{\chi_{i\bi} + \ul{u}_{i\bi}}  \leq
   \sum \frac{1}{\lambda_i (\chi_{\ul{u}})}  \leq  \frac{n}{\psi}.
\end{equation}

We conclude this section with
the following inequality which will play a crucial role in both the gradient
and second order estimates in the sections below.

\begin{lemma}
\label{gblq-lemma-C20}
There exist $\theta > 0$ and $N \geq n$ depending on $\epsilon$
such that if $W \geq N$ then
\begin{equation}
\label{gblq-C210}
F^{i\bj} (\chi_{i\bj} + \ul{u}_{i\bj}) \geq \frac{n + \theta}{\psi}.
\end{equation}
\end{lemma}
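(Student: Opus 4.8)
The plan is to work at a single point $p$ and reduce \eqref{gblq-C210} to a one‑variable inequality. First I would choose coordinates simultaneously diagonalizing the pair, so that $g_{i\bj} = \delta_{ij}$ and $\fg_{i\bj}$ is diagonal at $p$, and write $\mu_i = \fg_{i\bi} > 0$ and $a_i = \chi_{i\bi} + \ul{u}_{i\bi}$. Because the contraction $F^{i\bj}(\chi_{i\bj} + \ul{u}_{i\bj})$ is a scalar, I may evaluate it in this frame, where $F^{i\bj} = (\fg^{i\bi})^2 \delta_{ij}$, so that it equals $\sum_i a_i/\mu_i^2$. In the same frame equation \eqref{CH-I10k=1} reads $\sum_i 1/\mu_i = n/\psi$, the two‑sided bound \eqref{gblq-C214} gives $\epsilon \leq a_i \leq \epsilon^{-1}$, and \eqref{CH-I10k=1''} (which holds in any $g$‑orthonormal frame) gives $\sum_i 1/a_i \leq n/\psi$. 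The trace $W = \sum_i \mu_i$ is the quantity to be kept large.

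The key observation is the sharp weighted arithmetic–geometric identity
\[ \frac{a_i}{\mu_i^2} + \frac{1}{a_i} - \frac{2}{\mu_i} = \frac{(a_i - \mu_i)^2}{a_i \mu_i^2} \geq 0. \]
Summing over $i$ and inserting $\sum_i 1/\mu_i = n/\psi$ together with $\sum_i 1/a_i \leq n/\psi$ yields
\[ \sum_i \frac{a_i}{\mu_i^2} \geq \frac{2n}{\psi} - \frac{n}{\psi} + \sum_i \frac{(a_i - \mu_i)^2}{a_i \mu_i^2} = \frac{n}{\psi} + \sum_i \frac{(a_i - \mu_i)^2}{a_i \mu_i^2}. \]
This recovers only the borderline value $n/\psi$, which is all that plain convexity of $A \mapsto \tr A^{-1}$ (equivalently, the unweighted estimate) can give; hence the entire content of the lemma is to extract a definite surplus from the defect sum once $W$ is large.

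To produce the surplus I would use the largest eigenvalue alone. Since $W \geq N$, some index $i_0$ satisfies $\mu_{i_0} \geq W/n \geq N/n$. Writing the corresponding defect term as $\frac{(a_{i_0} - \mu_{i_0})^2}{a_{i_0}\mu_{i_0}^2} = \frac{1}{a_{i_0}}\bigl(1 - \frac{a_{i_0}}{\mu_{i_0}}\bigr)^2$ and using $a_{i_0} \leq \epsilon^{-1}$ and $\mu_{i_0} \geq N/n$, the ratio $a_{i_0}/\mu_{i_0} \leq n/(\epsilon N)$ is at most $\tfrac12$ as soon as $N \geq 2n/\epsilon$; combined with $1/a_{i_0} \geq \epsilon$ this bounds the single term below by $\epsilon/4$. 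Dropping the remaining nonnegative defect terms gives $\sum_i a_i/\mu_i^2 \geq n/\psi + \epsilon/4$. Finally, choosing $N = \max\{n, 2n/\epsilon\}$ and $\theta = \tfrac{\epsilon}{4}\inf_M \psi > 0$ (depending on $\epsilon$ as required) we have $\theta/\psi \leq \epsilon/4$ since $\inf_M\psi \leq \psi$, and \eqref{gblq-C210} follows.

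The step I expect to be the main conceptual obstacle is recognizing that neither dropping $a_i$ to its lower bound $\epsilon$ nor invoking convexity suffices — both stop exactly at $n/\psi$ with no room to spare — so the strict gain must be mined from the quadratic defect $(a_i - \mu_i)^2$, and that this defect is only controlled by forcing one $\mu_i$ to dwarf the uniformly bounded $a_i$. Once the correct weight (the choice $c = 1$ above, matching the subsolution normalization $\sum 1/a_i \leq n/\psi$) is identified, the remaining estimates are elementary.
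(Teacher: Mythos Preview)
Your argument is correct. You diagonalize $g$ and $\fg$ at the point, use the elementary identity
\[
\frac{a_i}{\mu_i^2} + \frac{1}{a_i} - \frac{2}{\mu_i} = \frac{(a_i-\mu_i)^2}{a_i\mu_i^2},
\]
sum it together with $\sum 1/\mu_i = n/\psi$ and $\sum 1/a_i \le n/\psi$, and then extract a uniform positive defect from the index with the largest $\mu_i$ via $a_i \le \epsilon^{-1}$. The choice $\theta = \tfrac{\epsilon}{4}\inf_M\psi$, $N = \max\{n,\,2n/\epsilon\}$ is valid.

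The paper proceeds differently: it drops the $i=1$ term (with $\mu_1$ largest) entirely and applies the Cauchy--Schwarz inequality to the remaining $n-1$ indices,
\[
\sum_{i\ge 2}\frac{a_i}{\mu_i^2}\ \ge\ \Big(\sum_{i\ge 2}\frac{1}{\mu_i}\Big)^2\Big/\sum_{i\ge 2}\frac{1}{a_i},
\]
and then uses $\sum_{i\ge 2}1/\mu_i = n/\psi - 1/\mu_1$ together with $\sum_{i\ge 2}1/a_i \le n/\psi - 1/a_1$ and the upper bound $a_1 \le \epsilon^{-1}$. Your completing-the-square route is arguably cleaner and gives explicit constants without an appeal to Cauchy--Schwarz. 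On the other hand, the paper's decomposition isolates index $1$ from the outset, which is what makes Remark~\ref{gblq-lemma-C20}+1 (replacing \eqref{CH-I20} by the weaker Song--Weinkove condition \eqref{CH-I20'}) transparent: only the partial sum $\sum_{i\ge 2}1/a_i$ enters. Your proof, as written, uses the full bound $\sum_i 1/a_i \le n/\psi$ and would need a small modification to accommodate \eqref{CH-I20'}.
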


\begin{proof}
We may assume $g_{i\bj} = \delta_{ij}$ and
$\{\fg_{i\bj}\}$ is diagonal.
Suppose that $\fg_{1\bar{1}} \geq \dots \geq \fg_{n\bn}$.
By Schwarz inequality, \eqref{CH-I10k=1},  \eqref{CH-I10k=1''}
 and \eqref{gblq-C214} we have
\begin{equation}
\label{gblq-C95}
 \begin{aligned}
 \sum_{i\geq 2} (\fg^{i\bi})^2 (\chi_{i\bi} + \ul{u}_{i\bi})
  \geq \,& \Big(\sum_{i\geq 2} \fg^{i\bi}\Big)^2
           \Big/ \sum_{i\geq 2} \frac{1}{\chi_{i\bi} + \ul{u}_{i\bi}} \\
  \geq \,& \Big(\frac{n}{\psi} - \fg^{1\bar{1}}\Big)^2
           \Big/ \Big(\frac{n}{\psi} - \frac{1}
            {\chi_{1\bar{1}} + \ul{u}_{1\bar{1}}}\Big) \\
  \geq \,& \frac{(n- \psi \fg^{1\bar{1}})^2}{n \psi}
           \Big(1 + \frac{\psi}{n (\chi_{1\bar{1}} + \ul{u}_{1\bar{1}})}\Big)  \\
  \geq \,& \frac{(n- \psi \fg^{1\bar{1}})^2}{n \psi}
            \Big(1 + \frac{\epsilon \psi}{n}\Big) \\
     \geq \,& \frac{n + \theta}{\psi}
    \end{aligned}
  \end{equation}
provided $\fg_{1\bar{1}}$ is sufficiently large.
\end{proof}

\begin{remark}
One can replace assumption  \eqref{CH-I20} by  \eqref{CH-I20'} in
Lemma~\ref{gblq-lemma-C20}.
This is clear from the proof.
\end{remark}

\section{Gradient estimates}
\label{glq-C1}
\setcounter{equation}{0}
\medskip

Let $u \in C^{4} (M)$ be a solution of equation~\eqref{CH-I10}.
The primary goal of this section is to establish the {\em a priori} gradient
estimates.

\begin{proposition}
\label{gblq-prop-G10}
There exists a uniform constant $C > 0$
such that
\begin{equation}
\label{cma-40}
\max_{\bM} |\nabla u| \leq 
 C (1 + \max_{\partial M}  |\nabla u|).  
\end{equation}
\end{proposition}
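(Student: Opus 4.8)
The plan is to control $\beta:=|\nabla u|^2$ by a maximum-principle argument applied to the auxiliary function $Q=\log\beta+\phi$, where $\phi$ is a correction term assembled from $u$, the oscillation $C_0$, and the subsolution $\ul u$ (a strictly convex function of $u$ together with a multiple of $\ul u-u$). Since the desired estimate \eqref{cma-40} permits a boundary contribution, I may assume $Q$ attains its maximum over $\bM$ at an interior point $p$, and I normalize coordinates at $p$ so that $g_{i\bj}=\delta_{ij}$ and $\{\fg_{i\bj}\}$ is diagonal. Writing $\cL=F^{i\bj}\nabla_{\bj}\nabla_i$ for the linearized operator, the argument rests on the two conditions $\nabla Q=0$ and $\cL Q\le 0$ at $p$.

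The computational heart is the expansion of $\cL\beta$. I differentiate $\beta$ twice, commute covariant derivatives using \eqref{gblq-B145} and \eqref{gblq-B147} to replace the third-order terms $F^{i\bi}u_{ki\bi}u_{\bk}$ and their conjugates by $F^{i\bi}u_{i\bi k}u_{\bk}$ modulo curvature and torsion, and then eliminate the resulting $F^{i\bi}(u_{i\bi k}u_{\bk}+u_ku_{i\bi\bk})$ by means of the once-differentiated equation \eqref{gblq-M70}. What remains is a nonnegative good term $\sum_k F^{i\bi}(|u_{ki}|^2+|u_{k\bi}|^2)$, a curvature error of size $C\beta\sum F^{i\bi}$, torsion errors linear in $\nabla u$ and in the second derivatives, and terms of size $C\sqrt\beta$ coming from $\nabla\psi$. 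The torsion errors I absorb into the good term by Cauchy--Schwarz, using $\sum\fg^{i\bi}=n/\psi$ to keep the bounded contractions under control; after dividing by $\beta$ (the $\log$), the genuinely dangerous leftover is of size $\sum F^{i\bi}$.

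The crux is that $\sum F^{i\bi}=\sum(\fg^{i\bi})^2$ is not a priori bounded, so this leftover cannot be absorbed by elementary means, and this is exactly where the subsolution is indispensable. The term $\phi'\,\cL u=\phi'\big(n/\psi-F^{i\bi}\chi_{i\bi}\big)$ involves $F^{i\bi}\chi_{i\bi}$, which carries no favorable sign because $\chi$ need not be positive; but rewriting $\chi_{i\bi}=(\chi_{i\bi}+\ul u_{i\bi})-\ul u_{i\bi}$ and invoking \eqref{gblq-C214}--\eqref{gblq-C215} produces the positive quantity $F^{i\bi}(\chi_{i\bi}+\ul u_{i\bi})\ge\epsilon\sum F^{i\bi}$, the only available source of a positive multiple of $\sum F^{i\bi}$. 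I then organize the conclusion by the dichotomy built into Lemma~\ref{gblq-lemma-C20}: if $W(p)<N$, then $\sum\fg^{i\bi}=n/\psi$ forces $\fg_{i\bi}\ge(\inf_M\psi)/n$ while $W<N$ forces $\fg_{i\bi}\le N$, so $\cL$ is uniformly elliptic and the strictly convex part of $\phi$ (with size governed by $C_0$) yields a term bounded below by $c\beta$ that dominates the now-bounded errors; if $W(p)\ge N$, Lemma~\ref{gblq-lemma-C20} upgrades the previous inequality to the strict bound $F^{i\bi}(\chi_{i\bi}+\ul u_{i\bi})\ge(n+\theta)/\psi$, supplying the positive margin needed to swallow the $\sum F^{i\bi}$ error.

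The step I expect to be the main obstacle is the balancing of the first-order terms against this margin. Because $\ul u$ enters the test function, its own gradient contributes a further error of size $\sum F^{i\bi}$ whose coefficient grows faster in the coupling constant than the subsolution gain does, so that constant cannot simply be taken large; the resolution is to let the convexity of $\phi$ (hence the dependence on $C_0$) absorb the gradient terms $F^{i\bi}|u_i|^2$ while reserving the subsolution's strict positivity solely for the curvature-type error. Once these are balanced, $\cL Q>0$ at $p$ for $\beta$ large contradicts $\cL Q\le 0$, which bounds $\beta(p)$, and hence $\max_{\bM}Q$, by a constant depending only on $C_0$, $\epsilon$, $\inf_M\psi$, and the other stated data, yielding \eqref{cma-40}.
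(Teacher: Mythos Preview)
Your overall architecture is right—maximum principle on $\log|\nabla u|^2+\phi$, dichotomy on $W$, and the subsolution via Lemma~\ref{gblq-lemma-C20} in the large-$W$ regime—but there is a genuine misconception at the center of your diagnosis. You write that $\sum F^{i\bi}=\sum(\fg^{i\bi})^2$ is not a priori bounded. For this equation it is: from \eqref{CH-I10k=1} one has $\sum\fg^{i\bi}=n/\psi$, hence $\sum(\fg^{i\bi})^2\le(n/\psi)^2\le C$, a fact the paper records right after \eqref{gblq-C75.1} and uses freely. So the curvature and torsion errors in $\cL\log\beta$ are already $O(1)$, not $O(\sum F^{i\bi})$. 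The subsolution is indispensable for a different reason: after the cancellations you are left needing $F^{i\bi}\phi_{i\bi}$ to exceed a fixed positive constant, and since $F^{i\bi}u_{i\bi}=n/\psi-F^{i\bi}\chi_{i\bi}$ carries no sign, only the strict gain $F^{i\bi}(\chi_{i\bi}+\ul u_{i\bi})-n/\psi\ge\theta/\psi$ from Lemma~\ref{gblq-lemma-C20} supplies that margin. Your appeal to \eqref{gblq-C215}, which gives $\epsilon\sum F^{i\bi}$, is not the mechanism actually at work.

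On the test function and the case split you diverge from the paper. The paper takes $\phi=Ae^{\ul u-u}$, so that $\phi_{i\bi}=\phi(\eta_i\eta_{\bi}+\eta_{i\bi})$ automatically contains the quadratic gradient term $F^{i\bi}\eta_i\eta_{\bi}$; this is precisely what absorbs the cross term $2F^{i\bi}\fRe\{u_k u_{i\bk}\phi_{\bi}\}$ in \eqref{gblq-G60'}--\eqref{gblq-G70'} and yields the clean inequality $\tfrac12 F^{i\bi}\eta_i\eta_{\bi}+F^{i\bi}(\chi_{i\bi}+\ul u_{i\bi})-n/\psi\le C(\phi^{-1}+|\nabla u|^{-2})$. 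Your $\phi=h(u)+B(\ul u-u)$ lacks any $\eta_i\eta_{\bi}$ contribution, so the cross term with $B\eta_{\bi}$ produces an error of size $CB\beta$ that competes directly with the gain $B\theta\beta/\psi$; this is the balancing problem you anticipate, and it is genuinely delicate (it cannot be fixed just by taking $B$ large). The exponential choice sidesteps it entirely. For the small-$W$ case, the paper does not argue by uniform ellipticity plus a convex-in-$u$ term as you propose; instead it uses the surviving $F^{i\bi}\eta_i\eta_{\bi}$ together with $\epsilon\sum F^{i\bi}$ and the AM--GM bound $\min_i F^{i\bi}\cdot(\sum F^{i\bi})^{n-1}\ge n^n(\det\fg^{i\bi})^2\ge c$ to conclude $|\nabla\eta|\le C$ directly. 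Your uniform-ellipticity route is a legitimate alternative in that regime, but the large-$W$ case would go through much more smoothly with the paper's exponential weight.
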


\begin{proof}
Let $\phi =A e^{\eta}$ where $\eta = \ul{u} - u$
and $A$ is a positive constant to be determined later.
Suppose the function $e^{\phi} |\nabla u|^2$ attains its maximum at
an interior point $p \in M$. We choose local coordinate around $p$ such that
 $g_{i\bj} = \delta_{ij}$ and $\fg_{i\bj}$ is diagonal at $p$ where, unless
otherwise indicated, the computations below are evaluated.

 For each $i = 1, \ldots, n$, we have
\begin{equation}
\label{gblq-G30}
  \frac{(|\nabla u|^2)_i}{|\nabla u|^2} + \phi_i = 0, \;\;
\frac{(|\nabla u|^2)_{\bi}}{|\nabla u|^2} + \phi_{\bi} = 0
\end{equation}
and
\begin{equation}
\label{gblq-G40}
 \frac{(|\nabla u|^2)_{i\bi}}{|\nabla u|^2}
-  \frac{|(|\nabla u|^2)_{i}|^2}{|\nabla u|^4}
  + \phi_{i\bi} \leq 0.
\end{equation}

A straightforward calculation shows that
\begin{equation}
\label{cma2-M50}
 (|\nabla u|^2)_i =  u_k u_{i\bk} + u_{ki} u_{\bk},
\end{equation}
\begin{equation}
\label{cma2-M60}
\begin{aligned}
(|\nabla u|^2)_{i\bi}
= \, & u_{k \bi} u_{i \bk} + u_{ki} u_{\bk \bi}
         + u_{ki\bi} u_{\bk} + u_k u_{i\bk \bi} \\
 = \, & u_{ki} u_{\bk \bi}+ u_{i\bi k} u_{\bk} + u_{i \bi \bk} u_k
        + R_{i \bi k \bl} u_l u_{\bk} \\
      & +  \sum_k |u_{i\bk} - T^k_{il} u_{\bl}|^2- \sum_k |T^k_{il}  u_{\bl}|^2.
\end{aligned}
\end{equation}
 It follows that
\begin{equation}
\label{bglq-M80}
F^{i\bi} (|\nabla u|^2)_{i\bi}
  \geq F^{i\bi} u_{ki} u_{\bk \bi}
   +   \sum_k F^{i\bi}|u_{i\bk} - T^k_{il} u_{\bl}|^2
   - C (1+ |\nabla u|^2).
\end{equation}

By \eqref{gblq-G30} and \eqref{cma2-M50},
\begin{equation}
\label{gblq-G50}
 |(|\nabla u|^2)_i|^2 = |u_{\bk} u_{ki}|^2
 - 2 |\nabla u|^2  \fRe \{u_k u_{i\bk} \phi_{\bi}\} - |u_k u_{i\bk}|^2.
\end{equation}
Combining \eqref{gblq-G40}, \eqref{gblq-G50} and \eqref{bglq-M80}, we obtain
\begin{equation}
\label{cma2-M40}
\begin{aligned}
 |\nabla u|^2 F^{i\bi} \phi_{i\bi} +  2 F^{i\bi} \fRe \{u_k u_{i\bk} \phi_{\bi}\}
    \leq C  (1 + |\nabla u|^2).
  \end{aligned}
\end{equation}

Now,
\[ \phi_i = \phi \eta_i, \;\; \phi_{i\bi}
    = \phi (\eta_i \eta_{\bi} + \eta_{i\bi}). \]
We have
\begin{equation}
\label{gblq-G60'}
\begin{aligned}
 2 \phi^{-1} F^{i\bi} \fRe\{u_k u_{i\bk} \phi_{\bi}\}
     = \,& 2 \fg^{i\bi} \fRe\{u_{i} \eta_{\bi}\}
        - 2 F^{i\bi} \fRe\{\chi_{i\bk} u_k  \phi_{\bi}\} \\
  \geq \,& - \frac{1}{2} |\nabla u|^2 F^{i\bi} \eta_{i} \eta_{\bi} - C
     \end{aligned}
\end{equation}
and
\begin{equation}
\label{gblq-G70}
\begin{aligned}
 \phi^{-1} F^{i\bi} \phi_{i\bi}
  = \, &  F^{i\bi} \eta_{i} \eta_{\bi} + F^{i\bi} \eta_{i\bi} \\
\geq \, &   F^{i\bi} \eta_{i} \eta_{\bi}
           + F^{i\bi} (\ul{u}_{i\bi} + \chi_{i\bi})
           - \frac{n}{\psi}. 
      \end{aligned}
\end{equation}
Therefore, by \eqref{cma2-M40},
\begin{equation}
\label{gblq-G70'}
\begin{aligned}
\frac{1}{2} F^{i\bi} \eta_{i} \eta_{\bi}
 + F^{i\bi} (\ul{u}_{i\bi} + \chi_{i\bi})
  - \frac{n}{\psi}
 \leq \, & C (\phi^{-1} + |\nabla u|^{-2}). 
      \end{aligned}
\end{equation}

We consider two cases separately: (a) $W > N$ for some $N$ sufficiently large,
and (b) $W \leq N$.

In case (a) we have
\[ F^{i\bi} (\ul{u}_{i\bi} + \chi_{i\bi})
  - \frac{n}{\psi} \geq \frac{\theta n}{\psi}, \]
by Lemma~\ref{gblq-lemma-C20}
for some $\theta > 0$. Therefore from \eqref{gblq-G70'}
we obtain a bound $|\nabla u| \leq C$ when $A$ is chosen
sufficiently large.

Suppose now that $W \leq N$. Then, using equation~\eqref{CH-I10k=1},
 \begin{equation}
\begin{aligned}
 F^{i\bi} \eta_{i} \eta_{\bi} + F^{i\bi} (\ul{u}_{i\bi} + \chi_{i\bi})
 \geq \, &  |\nabla \eta|^2 \min_{i} F^{i\bi}
         + \epsilon \sum F^{i\bi} \\
 \geq & \, n |\nabla \eta|^{\frac{2}{n}} \epsilon^{\frac{n-1}{n}}
          (\det \fg^{i\bj})^{\frac{2}{n}} \\
 \geq & \,  \epsilon^{\frac{n-1}{n}} |\nabla \eta|^{\frac{2}{n}}
        \Big(\frac{\psi}{W^{n-1}}\Big)^{\frac{2}{n}}  \\
 \geq & \, c_0 |\nabla \eta|^{\frac{2}{n}}.
     \end{aligned}
\end{equation}
Plugging this back in \eqref{gblq-G70'} 
we derive a bound $|\nabla \eta| \leq C$ which in turn implies a bound
 $|\nabla u| \leq C$.
\end{proof}

\bigskip

\section{The second order estimates}
\label{glq-C2}
\setcounter{equation}{0}
\medskip

In this section we derive second order estimates for solutions of
equation~\eqref{CH-I10}.

\begin{proposition}
\label{gblq-prop-C10}
Let $u \in C^4 (M)$ be a solution of equation~\eqref{CH-I10}.
There exists a constant $C > 0$ depending on $\epsilon$, $\sup \psi^{-1}$,
$\sup_M u - \inf_M u$,
the $C^2$ norms of $\chi$, $\ul{u}$ and $\psi$,
and the geometric quantities of $M$,
such that
\[ \max_{\bM} |\Delta u| \leq  C (1 + \max_{\partial M}  |\Delta u|). \]

\end{proposition}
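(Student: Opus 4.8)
The plan is to bound $W = \tr \chi + \Delta u$ from above by the maximum principle; since $\{\fg_{i\bj}\} = \{\chi_{i\bj}+u_{i\bj}\} > 0$ forces $W = \sum_i \fg_{i\bi} > 0$ and $\Delta u = W - \tr \chi \geq -\tr \chi$, only an upper bound on $W$ is needed. In parallel with the gradient estimate I would test the function
\[
\Phi = \log W + \phi, \qquad \phi = A e^{\eta}, \quad \eta = \ul{u} - u,
\]
with $A$ a large constant to be fixed at the end. If $\Phi$ attains its maximum over $\bM$ on $\partial M$, the asserted inequality follows with the boundary term; so I may assume the maximum is at an interior point $p$ and choose coordinates with $g_{i\bj} = \delta_{ij}$ and $\{\fg_{i\bj}\}$ diagonal at $p$.

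At $p$ the first order conditions give $W_i/W + \phi_i = 0$, and the second order condition contracted against the positive matrix $\{F^{i\bi}\}$ yields $F^{i\bi}(W_{i\bi}/W - |W_i|^2/W^2) + F^{i\bi}\phi_{i\bi} \leq 0$. I would insert the lower bound \eqref{gblq-C120.1} for $F^{i\bi}W_{i\bi}$, which (after dividing by $W$) supplies the nonnegative third order terms $\frac{1}{W}\sum_k F^{i\bi}\fg^{j\bj}(|\fg_{i\bj k} - T^j_{ik}\fg_{j\bj}|^2 + |\fg_{i\bj k}|^2)$ together with an error $-C$. For the zeroth order part, using $\phi_{i\bi} = \phi(\eta_i \eta_{\bi} + \eta_{i\bi})$, the identity $\eta_{i\bi} = (\ul{u}_{i\bi} + \chi_{i\bi}) - \fg_{i\bi}$, and $F^{i\bi}\fg_{i\bi} = \sum_i \fg^{i\bi} = n/\psi$ from \eqref{CH-I10k=1}, I obtain
\[
F^{i\bi}\phi_{i\bi} = \phi\Big(F^{i\bi}|\eta_i|^2 + F^{i\bi}(\ul{u}_{i\bi}+\chi_{i\bi}) - \frac{n}{\psi}\Big).
\]

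As in the gradient estimate I would split into the cases $W(p) > N$ and $W(p) \leq N$. When $W(p) > N$, Lemma~\ref{gblq-lemma-C20} gives $F^{i\bi}(\ul{u}_{i\bi}+\chi_{i\bi}) \geq (n+\theta)/\psi$, so that $F^{i\bi}\phi_{i\bi} \geq \phi\,\theta/\psi$ after discarding the nonnegative term $\phi F^{i\bi}|\eta_i|^2$. The crux is then to absorb the negative gradient term $F^{i\bi}|W_i|^2/W^2$ into the good third order terms. Writing $W_i = \sum_j \fg_{j\bj i}$ and using the commutation identity \eqref{gblq-B145} to replace $\fg_{j\bj i}$ by $\fg_{i\bj j} + T^j_{ij}\fg_{j\bj}$ (the torsion piece being $O(W)$), together with the Cauchy--Schwarz inequality $|\sum_j \fg_{i\bj j}|^2 \leq (\sum_j \fg_{j\bj})(\sum_j \fg^{j\bj}|\fg_{i\bj j}|^2) = W\sum_j \fg^{j\bj}|\fg_{i\bj j}|^2$, one verifies the controlling bound
\[
\frac{F^{i\bi}|W_i|^2}{W^2} \leq \frac{1}{W}\sum_k F^{i\bi}\fg^{j\bj}\big(|\fg_{i\bj k}-T^j_{ik}\fg_{j\bj}|^2+|\fg_{i\bj k}|^2\big)+C .
\]
Thus this term is dominated by the right hand side of \eqref{gblq-C120.1}, and what survives at the maximum is $0 \geq \phi(p)\,\theta/\psi - C$. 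Since $\phi(p) \geq A e^{\inf \eta}$ and $\eta$ is bounded in terms of $C_0$ and the subsolution data \eqref{gblq-C214}, this is impossible once $A$ is fixed large; hence $W(p) \leq N$. Then $\Phi(x) \leq \Phi(p)$ everywhere gives $\log W \leq \log N + \mathrm{osc}\,\phi$ on $\bM$, yielding the upper bound for $W$ and therefore for $|\Delta u|$.

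The main obstacle is precisely the third order absorption step: the indices of $\fg_{j\bj i}$ appearing in $W_i$ do not match those of the quantities $\fg_{i\bj k}$ occurring in the positive terms of \eqref{gblq-C120.1}, and realigning them through the torsion commutation \eqref{gblq-B145} while checking that the companion term $|\fg_{i\bj k} - T^j_{ik}\fg_{j\bj}|^2$ exactly compensates the loss in the Cauchy--Schwarz step is where the Hermitian torsion genuinely enters and where care with constants is required. The subsolution, via Lemma~\ref{gblq-lemma-C20}, supplies the strictly positive constant $\theta/\psi$ that ultimately drives the contradiction and closes the estimate.
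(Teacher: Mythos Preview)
Your proposal is correct and follows essentially the same maximum-principle argument as the paper: the test function $\log W$ plus a function of $\ul u - u$, the differential inequality \eqref{gblq-C120.1}, the Cauchy--Schwarz absorption of $|W_i|^2/W$ into the third-order terms after commuting $\fg_{j\bj i}$ to $\fg_{i\bj j}-T^j_{ij}\fg_{j\bj}$ via \eqref{gblq-B145}, and the decisive use of Lemma~\ref{gblq-lemma-C20}. The one visible difference is cosmetic: the paper takes $\phi=e^{A\eta}$ and keeps track of the cross term $2F^{i\bi}\mathfrak{Re}\{\phi_i\ol{\lambda_i}\}$ coming from the bounded remainder $\lambda_i=\sum_j(\chi_{j\bj i}-\chi_{i\bj j}+T^l_{ij}\chi_{l\bj})$, then absorbs it with the extra $A^2F^{i\bi}|\eta_i|^2$ that this choice of $\phi$ produces; your cruder bound $F^{i\bi}|W_i|^2/W^2\le \frac{1}{W}\sum_k F^{i\bi}\fg^{j\bj}(|\fg_{i\bj k}-T^j_{ik}\fg_{j\bj}|^2+|\fg_{i\bj k}|^2)+C$ bypasses that cross term entirely (using both good summands in \eqref{gblq-C120.1} and $\sum F^{i\bi}\le C$), which is why the simpler choice $\phi=Ae^{\eta}$ already suffices.
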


\begin{proof}
Consider $\varPhi = e^\phi W$, where as aforementioned $W = \tr \chi + \Delta u$ and $\phi$
is a function to be determined.
Suppose $\varPhi$ achieves its maximum at a point $p \in M$.
Choose local coordinates such that $g_{i\bj} = \delta_{ij}$
and $\fg_{i\bj}$ is diagonal at $p$.
 We have (all calculations  below are done at $p$)
\begin{equation}
\label{gblq-C80}
  \frac{W_i}{W} + \phi_i = 0, \;\;
\frac{W_{\bi}}{W} + \phi_{\bi} = 0,
\end{equation}
\begin{equation}
\label{gblq-C90}
 \frac{W_{i\bi}}{W}
-  \frac{|W_{i}|^2}{W^2}
  + \phi_{i\bi} \leq 0.
\end{equation}
By \eqref{gblq-B145} and  \eqref{gblq-C80},
\begin{equation}
\label{gblq-C905a}
  \begin{aligned}
 |W_i|^2 = \,& \Big|\sum_j \fg_{j\bj i}\Big|^2
        = \Big|\sum_j  (\fg_{i\bj j} - T_{ij}^j \fg_{j\bj}) + \lambda_i\Big|^2 \\
      \leq \,& \Big|\sum_j (\fg_{i\bj j} - T_{ij}^j \fg_{j\bj})\Big|^2
     + 2 \sum_j (\fRe\{(\fg_{i\bj j} - T_{ij}^j \fg_{j\bj}) \ol{\lambda_i}\})
               + |\lambda_i|^2 \\
        = \,& \Big|\sum_j (\fg_{i\bj j} - T_{ij}^j \fg_{j\bj})\Big|^2
          - 2 W \fRe\{\phi_i  \ol{\lambda_i}\} - 2 |\lambda_i|^2
         \end{aligned}
\end{equation}
where
\[ \lambda_{i} = \sum_j (\chi_{j\bj i} - \chi_{i\bj j} + T_{ij}^l \chi_{l\bj}). \]
By Schwarz inequality,
\begin{equation}
\label{gblq-C905b}
\Big|\sum_j (\fg_{i\bj j} - T_{ij}^j \fg_{j\bj})\Big|^2
\leq W \sum_j \fg^{j\bj} |\fg_{i\bj j} - T_{ij}^j \fg_{j\bj}|^2.
\end{equation}
It therefore follows from \eqref{gblq-C90}, \eqref{gblq-C905a},
\eqref{gblq-C905b} and \eqref{gblq-C120.1} that
\begin{equation}
\label{gblq-C94}
\begin{aligned}
0 \geq \,& F^{i\bi} W_{i\bi} - F^{i\bi} \frac{|W_i|^2}{W}
           + W F^{i\bi} \phi_{i\bi}
   \geq W F^{i\bi} \phi_{i\bi} + 2 F^{i\bi} \fRe\{\phi_i  \ol{\lambda_i}\} - C W.
\end{aligned}
\end{equation}

Let $\phi = e^{A \eta}$ where $\eta = \ul{u} - u + \sup_M (u - \ul{u})$ and
$A$ is a positive constant. We see that $\phi_i = A \phi \eta_i$ and
$\phi_{i\bi} =  A \phi \eta_{i\bi} + A^2 \phi \eta_i \eta_{\bi}$.
By Schwarz inequality,
\begin{equation}
\label{gblq-C200a}
  \begin{aligned}
 2 A F^{i\bi} \fRe\{\eta_i \ol{\lambda_i}\}
 \geq \,& - A^2 F^{i\bi} |\eta_i|^2 - C. 
\end{aligned}
\end{equation}
By Lemma~\ref{gblq-lemma-C20}  we see that
\begin{equation}
\label{gblq-C200}
  \begin{aligned}
\phi^{-1} F^{i\bi} \phi_{i\bi}
   = \,& A F^{i\bi} \eta_{i\bi} + A^2 F^{i\bi} \eta_i \eta_{\bi} \\
    = \,& A F^{i\bi} (\chi_{i\bi} + \ul{u}_{i\bi}) - \frac{n A}{\psi}
     + A^2 F^{i\bi} \eta_i \eta_{\bi} \\
\geq \,& \frac{ \theta A}{\psi} + A^2 F^{i\bi} \eta_i \eta_{\bi}
 \end{aligned}
 \end{equation}
provided $W$ is sufficiently large. This
combined with \eqref{gblq-C200a} and \eqref{gblq-C94} gives
\[ (\theta A - C \psi) W \leq C. \]
Choosing $A$ large enough we derive a bound $W \leq C$.
\end{proof}

By Proposition~\ref{gblq-prop-C10} equation~\eqref{CH-I10} is uniformly
elliptic for solutions $u$ with $\chi_u > 0$.
Therefore one can apply Evans-Krylov Theorem and the Schauder theory to
derive higher order estimates.

\bigskip


\medskip
\medskip

\end{document}